%%%%%%%%%%%%%%%%%%%%%%% file template.tex %%%%%%%%%%%%%%%%%%%%%%%%%
%
% This is a general template file for the LaTeX package SVJour3
% for Springer journals.          Springer Heidelberg 2010/09/16
%
% Copy it to a new file with a new name and use it as the basis
% for your article. Delete % signs as needed.
%
% This template includes a few options for different layouts and
% content for various journals. Please consult a previous issue of
% your journal as needed.
%
%%%%%%%%%%%%%%%%%%%%%%%%%%%%%%%%%%%%%%%%%%%%%%%%%%%%%%%%%%%%%%%%%%%
%
% First comes an example EPS file -- just ignore it and
% proceed on the \documentclass line
% your LaTeX will extract the file if required
% [arxiv_v2: filecontents example.eps stripped, 193 chars]
\RequirePackage{fix-cm}
\documentclass[smallextended]{svjour3m}       % onecolumn (second format)
\smartqed  % flush right qed marks, e.g. at end of proof
\usepackage{amsmath,amstext,amssymb}
\usepackage{graphicx}
\usepackage{float}
\usepackage{yfonts}

\newcommand{\C}{\textswab{C}}

%
% \usepackage{mathptmx}      % use Times fonts if available on your TeX system
%
% insert here the call for the packages your document requires
%\usepackage{latexsym}
% etc.
%
% please place your own definitions here and don't use \def but
% \newcommand{}{}
%
% Insert the name of "your journal" with
% \journalname{myjournal}
%
\begin{document}

\title{On central Fubini-like numbers and polynomials%\thanks{Grants or other notes
%about the article that should go on the front page should be
%placed here. General acknowledgments should be placed at the end of the article.}
}
\subtitle{}

%\titlerunning{Short form of title}        % if too long for running head

\author{Hacène Belbachir         \and
        Yahia Djemmada %etc.
}

%\authorrunning{Short form of author list} % if too long for running head

\institute{H. Belbachir \at
              USTHB, Faculty of Mathematics, RECITS Laboratory, BP 32, El Alia, 16111, Bab Ezzouar, Algiers, Algeria. \\
              \email{hacenebelbachir@gmail.com, hbelbachir@usthb.dz}           %  \\
%             \emph{Present address:} of F. Author  %  if needed
           \and
           Y. Djemmada \at
              USTHB, Faculty of Mathematics, RECITS Laboratory, BP 32, El Alia, 16111, Bab Ezzouar, Algiers, Algeria.\\
              \email{yahia.djem@gmail.com, ydjemmada@usthb.dz}
}

\date{}
% The correct dates will be entered by the editor

\maketitle

\begin{abstract}
We introduce \textit{the central Fubini-like} numbers and polynomials using Rota approach. Several identities and properties are established as generating functions, recurrences, explicit formulas, parity, asymptotics and determinantal representation.
\keywords{Fubini numbers and polynomials, central factorial numbers and polynomials, Fubini-like polynomials, difference operators, central difference.}
% \PACS{PACS code1 \and PACS code2 \and more}
 \subclass{05A10, 05A19, 05A40, 11B37, 11B73, 11B83, 11C08.}
\end{abstract}

\section{Introduction}
We start by giving some definitions that will be used throughout this paper.

The falling factorial $x_n$ is defined by
$$x_n=x(x-1)\cdots(x-n+1),$$
and the central factorial $x^{[n]}$, see \cite{Jr}, is defined by,
$$x^{[n]}=x(x+n/2-1)(x+n/2-2)\cdots(x-n/2+1).$$

It is well-known that, for all non-negative integers $n$ and $k$ ($k \leq n$), Stirling numbers of the second kind are defined as the coefficients $S(n,k)$ in the expansion
\begin{equation}\label{Sexp}
x^n=\sum_{k=0}^{n}S(n,k)x_k.
\end{equation}

Riordan, in his book \cite{Rd}, shows that, for all non-negative integers $n$ and $k$ ($k \leq n$), the central factorial numbers of the second kind are the coefficients $T(n,k)$ in the expansion
\begin{equation}\label{Texp7}
x^n=\sum_{k=0}^{n}T(n,k)x^{[k]}.
\end{equation}

In combinatorics, the number of ways to partition a set of $n$ elements into $k$ nonempty subsets are counted by Stirling numbers $S(n,k)$, and the central factorial numbers $T(2n,2n-2k)$ count the number of ways to place $k$ rooks on a 3D-triangle board of size $(n-1)$, see \cite{Kr}.
\begin{figure}[H]
\centering
\includegraphics[scale=0.7]{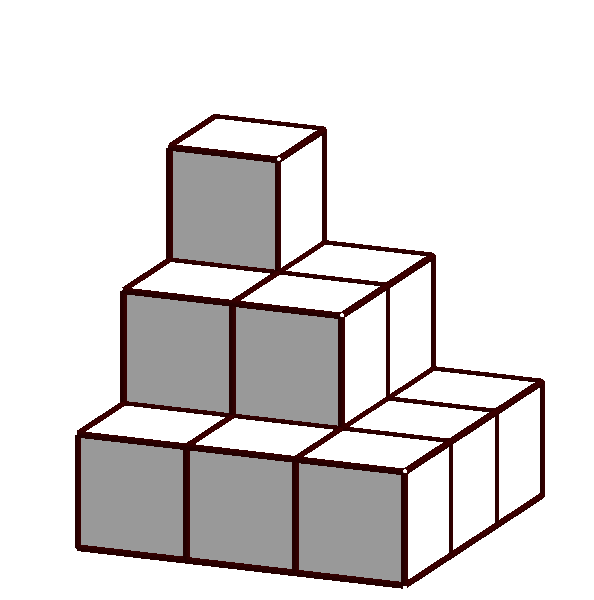}
\caption{3D-triangle board of size 3.}

\end{figure}

The coefficients $S(n,k)$ and $T(n,k)$ satisfy, respectively, the triangular recurrences
$$S(n,k)= kS(n-1 , k)+ S(n-1 , k-1) \text{\ \ \ } (k \leq n),$$
and
$$T(n,k)=\left(\frac{k}{2}\right)^2T(n-2,k)+T(n-2,k-2) \text{\ \ \ } (k \leq  n),$$
\\
where $\displaystyle{S(n,k)=T(n,k)=0}$ for $k > n$, and $\displaystyle{S(0,0)=T(0,0)=1} $.

$T(n,k)$ admits also the explicit expression 

\begin{equation}\label{Texp}
T(n,k)=\frac{1}{k!}\sum_{j=0}^{k} (-1)^j {k \choose j} \left( \frac{k}{2}-j\right)^n.
\end{equation}

The usual difference operator $\Delta$, the shift operator $\mathsf{E}^a$ and the central difference operator $\delta$ are given respectively by

$$\Delta f(x)=f(x+1)-f(x),$$

$$\mathsf{E}^a f(x)=f(x+a),$$
and
$$\delta f(x)=f(x+1/2)-f(x-1/2).$$

Riordan, \cite{Rd}, mentioned that the central factorial operator $\delta$ satisfy the following property
 
\begin{equation}
\delta f_n(x)=nf_{n-1}(x),
\end{equation}
where $f_n(x)$ is a sequence of polynomials satisfies $f_0(x)=1$.

The same operator $\delta$ can be expressed by the means of both $\Delta$ and $\mathsf{E}^a$, see \cite{Jr,Rd}, as follows

\begin{equation}\label{Rel1}
\delta f(x)=\Delta\mathsf{E}^{-1/2}f(x).
\end{equation}

For more details about difference operators, we refer the reader to \cite{Jr}.
\section{Central Fubini-like numbers and polynomials}

In 1975, Tanny \cite{Tn} introduced  the Fubini polynomials (or ordered Bell polynomials) $F_n(x)$ by applying a linear transformation $\mathcal{L}$ defined as,

$$\mathcal{L}(x_n):=n!x^n,$$
and define $F_n(x)$ by,
\begin{equation}\label{FnxDef}
F_n(x):=\mathcal{L}(x^n)=\mathcal{L}\left(\sum_{k=0}^{n}S(n,k)x_k\right)\sum_{k=0}^{n}S(n,k)\mathcal{L}(x_k)=\sum_{k=0}^{n}k!S(n,k)x^k,
\end{equation}
putting $x=1$ in \eqref{FnxDef}, we get
\begin{equation}\label{FnDef}F_n:=F_n(1)=\sum_{k=0}^{n}k!S(n,k),
\end{equation}
which is the $nth$ Fubini number.

Fubini polynomials $F_n(x)$ has the exponential generating function below, see \cite{Tn},
\begin{equation}
\sum_{n \geq 0}F_n(x)\frac{t^n}{n!}=\frac{1}{1-x(e^{t}-1)}.
\end{equation}

Recently, several authors have studied Fubini numbers and polynomials and giving some applications for these polynomials as in \cite{Jn,Kim1,Kim2,Kim4}.

For more details concerning Fubini numbers and polynomials, see \cite{Bo,Di,Gr,Me,Tn,Ze} and papers cited therein.

Now we introduce the linear transformation $\mathcal{Z}$ as,
\begin{definition}
For $n \geq 0$, we define the transformation
\begin{equation}
\mathcal{Z}(x^{[n]})=n!x^n.
\end{equation}
\end{definition}
then we have 
\begin{equation}
\mathcal{Z}(x^n)=\mathcal{Z}\left(\sum_{k=0}^{n}T(n,k)x^{[k]}\right)=\sum_{k=0}^{n}T(n,k)\mathcal{Z}(x^{[k]})=\sum_{k=0}^{n}k!T(n,k)x^k.
\end{equation}
And due to Formula \eqref{Texp}, we are now able to introduce the main tool of the paper,
\begin{definition}
The  $nth$ \textit{central Fubini-like} polynomial is given by,
\begin{equation}
\C_n(x):=\sum_{k=0}^{n}k!T(n,k)x^k.
\end{equation}
Setting $x=1$, we obtain the \textit{central Fubini-like numbers},  
\begin{equation}
\C_n=\C_n(1):=\sum_{k=0}^{n}k!T(n,k).
\end{equation}
\end{definition}
The first central polynomials $\C_n$ are,
\begin{table}[H]
$$
\begin{array}{ll}

\end{array}$$
$$\begin{array}{|l|l|l|}
\hline
n&\C_{2n}(x)&2^{2n}\C_{2n+1}(x)\\
\hline
0& 1 & x \\
\hline
1& 2 x^2 & x+24 x^3 \\
\hline
2& 2 x^2+24 x^4 & x+240 x^3+1920 x^5 \\
\hline
3& 2 x^2+120 x^4+720 x^6 & x+2184 x^3+67200 x^5+322560 x^7 \\
\hline
4&2 x^2+504 x^4+10080 x^6+ 40320 x^8 & x+19680 x^3+1854720 x^5+27095040 x^7+92897280 x^9 \\
\hline
\end{array}$$
\caption{First value of $\C_n(x)$.}
\end{table}
and the first $\C_n$ numbers are,

\begin{table}[h!]
$$\begin{array}{ll}
(\C_{2n})_{n \geq 0}:&1, 2, 26, 842, 50906, 4946282, 704888186, 138502957322, 35887046307866,\dots\\
\\
(2^{2n}\C_{2n+1})_{n \geq 0}:&1, 25, 2161, 391945, 121866721, 57890223865, 38999338931281, 
35367467110007785,\dots\\
\end{array}$$
\end{table}

%\subsection{Combinatorial identities}

\subsection{Exponential generating function}
We begin by establishing the exponential generating function of central Fubini-like polynomials,
\begin{theorem}\label{PEGF}
The polynomials $\C_n(x)$ have the following exponential generating function
\begin{equation}
G(x;t):=\sum_{n \geq 0}^{}\C_n(x)\frac{t^n}{n!}=\frac{1}{1-2x\sinh(t/2)}.
\end{equation}
\end{theorem}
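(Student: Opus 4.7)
The plan is to combine the definition of $\C_n(x)$ with the exponential generating function of the central factorial numbers $T(n,k)$ and then sum a geometric series. The main intermediate step is to establish the auxiliary identity
$$\sum_{n \geq k} T(n,k)\,\frac{t^n}{n!} \;=\; \frac{(2\sinh(t/2))^k}{k!},$$
which serves as the central analogue of the classical Stirling identity $\sum_n S(n,k)\,t^n/n! = (e^t-1)^k/k!$.

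First, I would prove this auxiliary identity directly from the explicit formula \eqref{Texp}. Substituting it into the EGF and interchanging the two sums yields
$$\sum_{n \geq 0} T(n,k)\,\frac{t^n}{n!} \;=\; \frac{1}{k!}\sum_{j=0}^{k}(-1)^j\binom{k}{j}e^{(k/2-j)t} \;=\; \frac{e^{kt/2}(1-e^{-t})^k}{k!} \;=\; \frac{(e^{t/2}-e^{-t/2})^k}{k!},$$
where the second equality is the binomial theorem and the third is a simple factorisation. Since $e^{t/2}-e^{-t/2}=2\sinh(t/2)$, the claim follows; note that the summation effectively begins at $n=k$ because $T(n,k)=0$ for $n<k$.

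Second, I would insert the definition $\C_n(x)=\sum_{k=0}^{n}k!\,T(n,k)\,x^k$ into the target sum, exchange the order of summation, and apply the auxiliary identity:
$$\sum_{n \geq 0}\C_n(x)\,\frac{t^n}{n!} \;=\; \sum_{k \geq 0}k!\,x^k\sum_{n \geq k}T(n,k)\,\frac{t^n}{n!} \;=\; \sum_{k \geq 0}\bigl(2x\sinh(t/2)\bigr)^{k}.$$
The last expression is a geometric series, and since $2\sinh(t/2)$ has zero constant term the closure $1/(1-2x\sinh(t/2))$ is valid as a formal power series in $t$, which is the claimed form.

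The only genuine obstacle is the derivation of the auxiliary EGF for $T(n,k)$; once that is in hand, the remainder of the argument is just an interchange of summations and a geometric-series closure. An alternative route would be to use the operator identity \eqref{Rel1} to show directly that $G(x;t)$ satisfies a difference equation in $t$, but working from \eqref{Texp} seems shorter and more transparent.
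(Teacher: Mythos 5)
Your proposal is correct and follows essentially the same route as the paper: insert the definition of $\C_n(x)$, exchange the order of summation, invoke the exponential generating function $\sum_{n}T(n,k)\,t^n/n!=(2\sinh(t/2))^k/k!$, and close the resulting geometric series. The only difference is that you derive that auxiliary identity from the explicit formula \eqref{Texp} via the binomial theorem, whereas the paper simply cites it from Riordan.
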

\begin{proof}
We have
\begin{equation*}
\begin{split}
\sum_{n \geq 0}^{}\C_n(x)\frac{t^n}{n!}&=\sum_{0\leq k \leq n < \infty}^{}k!T(n,k)x^k\frac{t^n}{n!}=\sum_{k \geq 0}k!x^k\sum_{n =k }^{\infty}T(n,k)\frac{t^n}{n!},\\
\text{from \cite{Rd} p. 214, we have }&\\
\sum_{n \geq 0}^{}T(n,k)\frac{t^n}{n!}&=\frac{1}{k!}\left(2 \sinh (t/2)\right)^k,\\
\text{therefore \hspace{2.55cm}}&\\
\sum_{n \geq 0}^{}\C_n(x)\frac{t^n}{n!}&=\sum_{k = 0}^{\infty}  \left( 2 \sinh (t/2)\right)^kx^k=\frac{1}{1-2x\sinh(t/2)}.
\end{split}
\end{equation*}
\end{proof}
\begin{corollary}\label{NEGF}
The numbers $\C_n$  have the following exponential generating function
\begin{equation}
\sum_{k \geq 0}^{n}\C_n\frac{t^n}{n!}=\frac{1}{1-2\sinh(t/2)}.
\end{equation}
\end{corollary}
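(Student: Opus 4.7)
The plan is essentially to specialize Theorem \ref{PEGF} at $x=1$. By definition, the central Fubini-like numbers are $\C_n = \C_n(1)$, so once the polynomial generating function has been established, the numerical version requires no new combinatorial content; it is a direct substitution.

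Concretely, I would start from the identity
\begin{equation*}
\sum_{n \geq 0} \C_n(x) \frac{t^n}{n!} = \frac{1}{1 - 2x\sinh(t/2)}
\end{equation*}
proved in Theorem \ref{PEGF}, observe that both sides are formal power series in $t$ whose coefficients are polynomials in $x$ (on the right, after expanding $1/(1-2x\sinh(t/2)) = \sum_{k\geq 0} (2x\sinh(t/2))^k$), and then set $x=1$. The left-hand side becomes $\sum_{n\geq 0} \C_n(1)\, t^n/n! = \sum_{n\geq 0} \C_n\, t^n/n!$ by the definition of the numbers $\C_n$, while the right-hand side becomes $1/(1 - 2\sinh(t/2))$, which is the claimed formula.

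There is no real obstacle here: the substitution $x=1$ is legal at the level of formal power series because each coefficient of $t^n/n!$ on both sides is a polynomial in $x$ of bounded degree (degree $n$), so evaluation at a specific value commutes with the summation. The only thing worth noting in the write-up is that one should cite Theorem \ref{PEGF} rather than redo the manipulation with $T(n,k)$, keeping the corollary genuinely a corollary.
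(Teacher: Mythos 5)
Your proposal is correct and matches the paper's intent exactly: the corollary is obtained by setting $x=1$ in Theorem \ref{PEGF} and using $\C_n=\C_n(1)$, which is precisely the substitution you carry out (the paper itself omits the proof as immediate). Your added remark that evaluation at $x=1$ commutes with the formal power series because each coefficient of $t^n/n!$ is a polynomial in $x$ is a harmless extra justification, not a deviation.
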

%\begin{proof}
%Corollary \ref{NEGF} is an immediate consequence of  Theorem \ref{PEGF} for $x=1$.
%\end{proof}
\subsection{Explicit representations}
In this subsection we propose some explicit formulas for the central Fubini-like polynomials, we start by the derivative representation.
\begin{proposition}\label{Prop1} The polynomials $\C_n(x)$ correspond to the higher derivative expression bellow,
$$\C_{n}(x)=\left.{\sum_{k\geq 0}^{\infty}\frac{d^n}{dt^n}\left(2x\sinh{(t/2)}\right)^{k}}\right\vert_{t=0}.$$
\end{proposition}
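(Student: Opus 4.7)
The plan is to obtain this identity directly from the exponential generating function established in Theorem~\ref{PEGF}. Since $G(x;t) = \sum_{n\geq 0} \C_n(x)\,t^n/n!$, Taylor's theorem gives
\[
\C_n(x) = \left.\frac{d^n}{dt^n} G(x;t)\right|_{t=0},
\]
so the proposition amounts to expanding $G(x;t)$ as a geometric series in $2x\sinh(t/2)$ and then moving the $n$-th derivative inside the sum over~$k$.

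Concretely, for $t$ in a small enough neighborhood of $0$ one has $|2x\sinh(t/2)|<1$, and therefore
\[
G(x;t) \;=\; \frac{1}{1-2x\sinh(t/2)} \;=\; \sum_{k\geq 0}\bigl(2x\sinh(t/2)\bigr)^{k}.
\]
Differentiating both sides $n$ times with respect to $t$ and evaluating at $t=0$, the left-hand side yields $\C_{n}(x)$ by the Taylor-coefficient formula recalled above, while the right-hand side yields exactly the expression claimed in the proposition.

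The only point that deserves a line of comment is the legitimacy of interchanging $d^{n}/dt^{n}|_{t=0}$ with the infinite sum over $k$. This turns out to be automatic: since $\sinh(t/2)$ has a zero of order one at $t=0$, the function $\bigl(2x\sinh(t/2)\bigr)^{k}$ has a Taylor expansion beginning with a multiple of $t^{k}$, so for every fixed $n$ its $n$-th derivative at $t=0$ vanishes whenever $k>n$. Consequently the apparent infinite sum in the statement is really a finite sum of at most $n+1$ nonzero terms, and no convergence issue arises. For this reason I do not expect any significant obstacle — the whole argument reduces to a geometric expansion followed by termwise differentiation justified by this order-of-vanishing observation.
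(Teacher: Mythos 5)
Your argument is correct and is essentially the same as the paper's: both extract $\C_n(x)$ as the $n$-th Taylor coefficient of the generating function from Theorem~\ref{PEGF} and then expand $1/(1-2x\sinh(t/2))$ as a geometric series. Your added remark that $(2x\sinh(t/2))^k$ vanishes to order $k$ at $t=0$, so the sum over $k$ is effectively finite, is a small justification the paper leaves implicit, but the approach is the same.
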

\begin{proof}
Let
\begin{equation*}
\begin{split}
\left.{\frac{d^n}{dt^n}\left(\sum_{m\geq 0}^{\infty}\C_m(x)\frac{t^m}{m!}\right)}\right\vert_{t=0}&=\left.{\sum_{m\geq n}^{\infty}\C_m(x)\frac{t^{m-n}}{(m-n)!}}\right\vert_{t=0}=\left.{\sum_{m\geq 0}^{\infty}\C_{n+m}(x)\frac{t^{m}}{m!}}\right\vert_{t=0}=\C_{n}(x).\\
\end{split}
\end{equation*}
Thus from Theorem \ref{PEGF} we get the result.
\end{proof}
\begin{remark}
From Formula \eqref{Texp}, it is clear that the central Fubini-like polynomials satisfy the following explicit formula
$$
\C_n(x)=\left.{\sum_{k=0}^{\infty}x^k\sum_{j=0}^{k}(-1)^j {k \choose j}(k/2-j)^n}\right.
.$$
\end{remark}
\begin{proof}
Let
\begin{equation*}
\begin{split}
\left.{\sum_{k=0}^{\infty}\frac{d^n}{dt^n}\left(2x\sinh{(t/2)}\right)^{k}}\right\vert_{t=0}&=\left.{\sum_{k=0}^{\infty}\frac{d^n}{dt^n}\left(xe^{-t/2}(e^t-1)\right)^{k}}\right\vert_{t=0}\\
&=\left.{\sum_{k=0}^{\infty}\frac{d^n}{dt^n}x^ke^{-kt/2}\sum_{j=0}^{k}(-1)^j {k \choose j}e^{(k-j)t}}\right\vert_{t=0}\\
&=\left.{\sum_{k=0}^{\infty}x^k\sum_{j=0}^{k}(-1)^j {k \choose j}\frac{d^n}{dt^n}e^{(k/2-j)t}}\right\vert_{t=0}\\
&=\left.{\sum_{k=0}^{\infty}x^k\sum_{j=0}^{k}(-1)^j {k \choose j}(k/2-j)^n}\right..
\\
\end{split}
\end{equation*}
\end{proof}
\begin{theorem}\label{Thm2}
For non-negative $n$, The following explicit representation holds true,
\begin{equation}\label{Comp}
\C_n(x)=x\sum_{k=0}^{n-1}{n \choose k}\sum_{j=0}^{k}{k \choose j}\Big(\frac{-1}{2}\Big)^{k-j}\C_j(x)=x\sum_{j=0}^{n-1}{n \choose j}\delta [0^{n-j}]\C_j(x).
\end{equation}
where $\delta[0^{n-j}]=(1/2)^{n-j}-(-1/2)^{n-j}$.
\end{theorem}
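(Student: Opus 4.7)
The plan is to start from the exponential generating function of Theorem~\ref{PEGF}, clear the denominator to obtain a functional equation, and then extract coefficients of $t^n/n!$ in two different ways, one for each equality.

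First, from $G(x;t)=1/(1-2x\sinh(t/2))$ and the identity $2\sinh(t/2)=e^{t/2}-e^{-t/2}$, I would multiply out to get the functional equation
$$G(x;t) = 1 + x\bigl(e^{t/2}-e^{-t/2}\bigr)\,G(x;t).$$
For $n\geq 1$, extracting $[t^n/n!]$ on both sides and using the binomial-convolution rule for products of EGFs gives the second equality at once: indeed $[t^m/m!]\bigl(e^{t/2}-e^{-t/2}\bigr)=(1/2)^m-(-1/2)^m=\delta[0^m]$, which vanishes at $m=0$, and $[t^j/j!]G(x;t)=\C_j(x)$, so
$$\C_n(x) = x\sum_{j=0}^{n-1}\binom{n}{j}\delta[0^{n-j}]\,\C_j(x).$$

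For the first equality I would use the alternative factorization $2\sinh(t/2)=e^{-t/2}(e^t-1)$ and perform the convolution in two stages. Writing $H(x;t):=e^{-t/2}G(x;t)$, one binomial convolution gives
$$[t^k/k!]\,H(x;t) = \sum_{j=0}^{k}\binom{k}{j}\Big(\frac{-1}{2}\Big)^{k-j}\C_j(x).$$
Then multiplying by $e^t-1$ and extracting $[t^n/n!]$ brings in the outer binomial sum over $k=0,\dots,n-1$ (again truncated at $n-1$ because $e^t-1$ has no constant term), and combining with the functional equation reproduces the first expression.

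The only care required is the bookkeeping of index ranges: both sums stop at $n-1$ rather than $n$ precisely because $2\sinh(t/2)$, equivalently both factors $e^{t/2}-e^{-t/2}$ and $e^{-t/2}(e^t-1)$, have vanishing constant term; for $n=0$ the right-hand side is an empty sum and the statement is to be read for $n\geq 1$. Beyond this, the two identities are simply the two natural ways of splitting the same convolution of $2\sinh(t/2)$ against $G(x;t)$, so no genuinely difficult step arises.
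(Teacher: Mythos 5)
Your proof is correct, but it follows a genuinely different route from the paper. You work entirely on the generating-function side: clearing the denominator in Theorem~\ref{PEGF} to get $G(x;t)=1+x\bigl(e^{t/2}-e^{-t/2}\bigr)G(x;t)$ and reading off the two identities as the two natural ways of convolving $2\sinh(t/2)$ against $G(x;t)$, with the truncation at $n-1$ explained by the vanishing constant term. The paper instead argues on the operator side, in the umbral spirit of the definition of $\C_n(x)$: it applies $\mathcal{Z}$ to $x^n$, uses Lemma~\ref{lm1} ($\mathcal{Z}(p_n)=x\mathcal{Z}(\delta p_n)$), the factorization $\delta=\Delta\mathsf{E}^{-1/2}$ from \eqref{Rel1}, and Tanny's expansion of $\Delta$ (Lemma~\ref{lm2}) to land on the same double sum, then collapses it with the identity $\binom{n}{k}\binom{k}{j}=\binom{n-j}{k-j}\binom{n}{j}$. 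Each approach has its merits: yours is shorter, makes both equalities fall out of a single functional equation, and makes the index range self-evident, at the cost of leaning on Theorem~\ref{PEGF}; the paper's stays within the finite-difference calculus that motivates $\C_n(x)$ in the first place and needs no analytic input. One small point of care in your version: the statement as printed claims to hold ``for non-negative $n$,'' and you correctly flag that the identity should be read for $n\geq 1$ (at $n=0$ both sides reduce to $1$ versus an empty sum), which is a defect of the statement rather than of your argument.
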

The proof will depend on Lemma \ref{lm1}, Lemma \ref{lm2} and Relation \eqref{Rel1}.
\begin{lemma}\label{lm1}For all polynomials $p_n(x)$ the following relation holds true
$$\mathcal{Z}(p_n(x))=x\mathcal{Z}(\delta p_n(x)).$$
\end{lemma}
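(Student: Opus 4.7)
The plan is to verify the identity by expanding $p_n(x)$ in the central factorial basis and checking it monomial by monomial. Since $\{x^{[k]}\}_{k\ge 0}$ forms a basis of the polynomial ring and both $\mathcal{Z}$ and $\delta$ act linearly, it suffices to establish the identity on a single basis element, i.e.\ for $p_n(x) = x^{[k]}$.

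For this reduction I would first invoke the defining property $\mathcal{Z}(x^{[k]}) = k!\, x^{k}$, and then use the central-difference property $\delta x^{[k]} = k\, x^{[k-1]}$, which is the instance of the general relation $\delta f_n = n f_{n-1}$ recalled earlier in the paper, applied to the central factorial sequence. Applying $\mathcal{Z}$ to the right-hand side and then multiplying by $x$ converts $k\, x^{[k-1]}$ into $k \cdot (k-1)!\, x^{k-1} \cdot x = k!\, x^{k}$, which is exactly $\mathcal{Z}(x^{[k]})$. So the identity holds on each basis element $x^{[k]}$ with $k \ge 1$ and then extends by linearity to any linear combination of such monomials.

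The only delicate point, and what I expect to be the sole bookkeeping obstacle, is the constant term $k = 0$: $\delta$ annihilates $x^{[0]} = 1$ while $\mathcal{Z}(1) = 1$, so the identity fails on a nonzero constant. Hence the lemma is to be read as applying to polynomials with no constant term in the central factorial expansion (equivalently, $p_n(0)=0$), which is exactly the situation of interest since $\C_n(x)$ has vanishing constant term for $n \ge 1$ (because $T(n,0)=0$ in that range). I would signal this hypothesis explicitly in the write-up; once it is in place, the argument is a direct one-line verification on the central factorial basis, and no deeper obstruction arises.
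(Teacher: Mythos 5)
Your argument is correct and is essentially the paper's own proof: both verify the identity on the central factorial basis via $\mathcal{Z}(x^{[k]})=k!\,x^{k}$ and $\delta x^{[k]}=k\,x^{[k-1]}$, and then extend by linearity. Your extra remark that the identity fails on the constant basis element $x^{[0]}=1$ (since $\delta 1=0$ while $\mathcal{Z}(1)=1$) is a legitimate caveat that the paper's proof passes over in silence, so making that hypothesis explicit is a small improvement rather than a deviation.
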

\begin{proof} We have
\begin{equation*}
\begin{split}
\mathcal{Z}(x^{[n]})&=n!x^n=xn(n-1)!x^{n-1}=x\mathcal{Z}(nx^{[n-1]})=x\mathcal{Z}(\delta x^{[n]}),
\end{split}
\end{equation*}
as any polynomial can be writing as sum of central factorials ($x^{[n]}$) we have our result.
\end{proof}
\begin{lemma}[Tanny \cite{Tn}]\label{lm2}
For all polynomials $p_n(x)$ we have
\begin{equation}
\Delta p_n(x)=\sum_{k=0}^{n-1}{n \choose k} p_k(x).
\end{equation}
\end{lemma}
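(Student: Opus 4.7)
The lemma is best read as a statement about a polynomial sequence $(p_n)_{n \geq 0}$ of Appell type, i.e.\ one satisfying $p_n'(x) = n p_{n-1}(x)$ with $p_0 \equiv 1$; the canonical instance is $p_n(x) = x^n$ itself, which is precisely the sequence needed when the lemma is invoked in the proof of Theorem \ref{Thm2} (after combining Lemma \ref{lm1} with Relation \eqref{Rel1}). For any such sequence, the argument is essentially a single line of the binomial theorem.

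The plan is to write $\Delta p_n(x) = p_n(x+1) - p_n(x)$ and expand $p_n(x+1)$ by Taylor's formula around $x$, then iterate the Appell relation to get $p_n^{(j)}(x) = \frac{n!}{(n-j)!} p_{n-j}(x)$ for $j \leq n$ and $p_n^{(j)} \equiv 0$ for $j > n$. Substituting yields
\[
p_n(x+1) \;=\; \sum_{j=0}^{n} \frac{p_n^{(j)}(x)}{j!} \;=\; \sum_{j=0}^{n} \binom{n}{j} p_{n-j}(x) \;=\; \sum_{k=0}^{n} \binom{n}{k} p_k(x),
\]
where the final step is the reindexing $k = n - j$. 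Peeling off the $k = n$ summand, which is exactly $p_n(x)$, and transferring it to the left-hand side produces the stated identity.

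In the simplest and most relevant case $p_n(x) = x^n$, the whole argument collapses to $(x+1)^n - x^n = \sum_{k=0}^{n-1} \binom{n}{k} x^k$, i.e.\ the binomial theorem with its top term removed, so no Taylor machinery is actually required. The only genuine obstacle is interpretive rather than technical: the quantifier ``for all polynomials $p_n(x)$'' must be read as ``for any polynomial sequence of Appell type,'' since for a truly arbitrary family (for instance $p_0 \equiv 1$, $p_1(x) = x^2$) the identity fails at once. Once the class is pinned down, the proof is immediate.
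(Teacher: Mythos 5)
Your proof is correct, and the interpretive point you raise is the right one: as literally quantified (``for all polynomials $p_n(x)$'') the identity is false, and it must be read for a sequence with $p_n' = np_{n-1}$ and $p_0\equiv 1$ (equivalently, for $p_n(x)=(x+c)^n$, which is exactly the instance $c=-1/2$ used in the proof of Theorem \ref{Thm2}). There is nothing in the paper to compare against: the lemma is stated with an attribution to Tanny and no proof is given, so your Taylor-expansion argument (which, as you note, reduces to the binomial theorem $\left(x+1\right)^n - x^n = \sum_{k=0}^{n-1}\binom{n}{k}x^k$ after a shift) fills a gap the paper leaves open rather than duplicating or diverging from an existing argument. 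The only cosmetic remark is that the Appell framing is more general than what the application requires; restricting to $p_n(x)=(x+c)^n$ would make the proof one line and still cover the paper's use of the lemma.
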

Now we give the proof of Theorem \ref{Thm2},
\begin{proof}
Using previous lemmas and setting $p_n(x)=x^n$, we get  
\begin{equation*}
\begin{split}
\mathcal{Z}(x^n)&=x\mathcal{Z}(\delta x^n)=x\mathcal{Z}\Big(\Delta E^{-1/2}x^n\Big)=x\mathcal{Z}\Big(\Delta \Big(x-\frac{1}{2}\Big)^n\Big)\\&=x\mathcal{Z}\Big(\sum_{k=0}^{n-1}{n \choose k} \Big(x-\frac{1}{2}\Big)^k\Big)=x\mathcal{Z}\Big(\sum_{k=0}^{n-1}{n \choose k} \sum_{j=0}^{k}{k \choose j} \Big(\frac{-1}{2}\Big)^{k-j}x^j\Big)\\
&=x\sum_{k=0}^{n-1}{n \choose k}\sum_{j=0}^{k}{k \choose j}\Big(\frac{-1}{2}\Big)^{k-j}\C_j(x),
\end{split}
\end{equation*}
using binomial product identity $\displaystyle {n \choose k}{ k \choose j}={n-j\choose k-j}{n \choose j}$ we get the result.
\end{proof}
\begin{corollary}\label{cor2}
The central Fubini-like numbers satisfy
\begin{equation}
\C_n=\sum_{j=0}^{n-1}{n \choose j}\delta [0^{n-j}]\C_j.
\end{equation}
\end{corollary}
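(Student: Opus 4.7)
The plan is to derive this corollary as an immediate specialization of Theorem~\ref{Thm2} at the point $x=1$. Recall from the definition of the central Fubini-like numbers that $\C_n := \C_n(1)$, so evaluating both sides of the identity
\[
\C_n(x)=x\sum_{j=0}^{n-1}\binom{n}{j}\delta [0^{n-j}]\,\C_j(x)
\]
at $x=1$ yields exactly the desired formula, since the prefactor $x$ becomes $1$ and each $\C_j(x)$ collapses to $\C_j$.

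Concretely, the steps I would execute are: first, state that by Theorem~\ref{Thm2} the polynomial identity above holds for every $x$; second, substitute $x=1$; third, invoke the definition $\C_n = \C_n(1)$ to rewrite both sides in terms of the numbers. No manipulation of $\delta[0^{n-j}]$ is needed, since that factor does not depend on $x$.

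There is essentially no obstacle here: the corollary is a direct numerical instance of the polynomial identity already proved, and the only bookkeeping is verifying that the leading $x$ factor disappears cleanly at $x=1$. One could alternatively give an independent derivation by applying Lemma~\ref{lm1} and Lemma~\ref{lm2} directly to $p_n(x)=x^n$ and then evaluating $\mathcal{Z}$ at $x=1$, but this would merely reproduce the proof of Theorem~\ref{Thm2} verbatim, so the one-line specialization is the natural and most economical route.
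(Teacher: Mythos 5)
Your proposal is correct and matches the paper's proof exactly: the paper also obtains the corollary by setting $x=1$ in Theorem~\ref{Thm2} and using $\C_n=\C_n(1)$. Nothing further is needed.
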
 
\begin{proof}
Setting $x=1$ we have the result.
\end{proof}
Now we give an explicit formula connecting the $n$th central Fubini-like polynomials with Stirling numbers of the second kind $S(n,k)$,
\begin{theorem}  \label{Strconv} The central Fubini-like polynomials $\C_n(x)$ satisfy,
\begin{equation}
\C_n(x)=\sum_{k=0}^{\infty}k!x^k\sum_{j=0}^{n} {n \choose j} \left( \frac{-k}{2} \right)^j S(n-j,k).
\end{equation}
\end{theorem}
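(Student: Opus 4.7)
The plan is to start from the explicit formula \eqref{Texp} for the central factorial numbers and rewrite $T(n,k)$ in terms of the Stirling numbers $S(n-j,k)$ by splitting the argument $k/2-j$ as $(k-j)-k/2$. Then the desired identity for $\C_n(x)$ follows directly from its definition.

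\textbf{Step 1.} Write $(k/2-j)^n = ((k-j) - k/2)^n$ and expand by the binomial theorem:
$$\left(\frac{k}{2}-j\right)^n = \sum_{j'=0}^{n}\binom{n}{j'}\left(\frac{-k}{2}\right)^{j'}(k-j)^{n-j'}.$$

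\textbf{Step 2.} Substitute into \eqref{Texp} and interchange the two finite sums:
$$T(n,k) = \frac{1}{k!}\sum_{j=0}^{k}(-1)^j\binom{k}{j}\sum_{j'=0}^{n}\binom{n}{j'}\left(\frac{-k}{2}\right)^{j'}(k-j)^{n-j'} = \sum_{j'=0}^{n}\binom{n}{j'}\left(\frac{-k}{2}\right)^{j'}\cdot\frac{1}{k!}\sum_{j=0}^{k}(-1)^j\binom{k}{j}(k-j)^{n-j'}.$$

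\textbf{Step 3.} Recognize the inner sum as the classical explicit formula for $S(n-j',k)$, namely
$$S(n-j',k) = \frac{1}{k!}\sum_{j=0}^{k}(-1)^j\binom{k}{j}(k-j)^{n-j'},$$
(which is the standard dual of \eqref{Sexp}). Hence
$$T(n,k) = \sum_{j=0}^{n}\binom{n}{j}\left(\frac{-k}{2}\right)^{j}S(n-j,k).$$

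\textbf{Step 4.} Multiply by $k!\,x^k$ and sum over $k$; using $\C_n(x)=\sum_{k\geq 0}k!\,T(n,k)x^k$ gives the claimed formula.

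The manipulations are routine once the right algebraic identity is identified; the only mild subtlety is the reindexing in Step 2 (justified by finiteness of both sums) and recognising that the upper limit on $k$ may be taken as $\infty$ since $S(n-j,k)=0$ when $k>n-j$. There is no serious obstacle.
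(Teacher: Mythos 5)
Your proof is correct, but it takes a genuinely different route from the paper. The paper argues at the level of generating functions: it writes $2\sinh(t/2)=e^{-t/2}(e^t-1)$ in the closed form of Theorem \ref{PEGF}, expands $\bigl(1-xe^{-t/2}(e^t-1)\bigr)^{-1}$ as a geometric series, inserts the exponential generating function $\sum_{n}S(n,k)t^n/n!=(e^t-1)^k/k!$ together with the expansion of $e^{-kt/2}$, and extracts the coefficient of $t^n/n!$ via the Cauchy product. You instead work entirely at the coefficient level: starting from the explicit formula \eqref{Texp}, the decomposition $k/2-j=(k-j)-k/2$ and the binomial theorem turn $T(n,k)$ into $\sum_{j}\binom{n}{j}(-k/2)^{j}S(n-j,k)$ once the inner alternating sum is recognized as the classical explicit formula for $S(n-j,k)$; multiplying by $k!x^k$ and summing finishes the argument. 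Your version is in effect the termwise shadow of the paper's power-series computation, but it is more elementary (only finite-sum identities, no formal series rearrangement), and as a by-product it isolates the cleaner intermediate identity $T(n,k)=\sum_{j=0}^{n}\binom{n}{j}(-k/2)^{j}S(n-j,k)$, which the paper never states explicitly. The generating-function route, on the other hand, makes the appearance of the factor $(-k/2)^j$ conceptually transparent as the contribution of $e^{-kt/2}$. Your remark that the outer sum may be extended to $k=\infty$ because $S(n-j,k)=0$ for $k>n-j$ correctly accounts for the range written in the statement.
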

\begin{proof}
From Theorem \ref{PEGF}, we have
\begin{equation*}
\sum_{n \geq 0}\C_n(x)\frac{t^n}{n}=\frac{1}{1-2x \sinh(t/2)},
\end{equation*} 
using the exponential form of $2x\sinh{(t/2)}$ we get,
\begin{equation*}
\begin{split}
\sum_{n \geq 0}\C_n(x)\frac{t^n}{n}&=\frac{1}{1-xe^{(-t/2)}(e^t-1)}=\sum_{k\geq 0}^{}x^ke^{(-kt/2)}(e^t-1)^k\\
\text{But it is known that,\ \ }&\\
&\sum_{n \geq 0}S(n,k)\frac{t^n}{n!}=\frac{(e^t-1)^k}{k!}, \\
\text{therefore \quad \quad \quad \quad \quad}&\\
\sum_{n \geq 0}\C_n(x)\frac{t^n}{n}&=\sum_{k\geq 0}^{}x^kk!\sum_{n\geq 0}{\left(\frac{-k}{2}\right)^n\frac{t^n}{n!}}\sum_{n \geq 0}S(n,k)\frac{t^n}{n!}.
\end{split}
\end{equation*} 
Then Cauchy's product implies the identity.
\end{proof} 
\begin{corollary}
The central Fubini-like numbers $\C_n$ satisfy,
\begin{equation}
\C_n=\sum_{k=0}^{\infty}k!\sum_{j=0}^{n} {n \choose j} \left( \frac{-k}{2} \right)^j S(n-j,k).
\end{equation}
\end{corollary}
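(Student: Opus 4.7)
The statement is the numerical specialization of Theorem \ref{Strconv}, so my plan is essentially to do nothing beyond substituting $x = 1$. Since $\mathcal{C}_n = \mathcal{C}_n(1)$ by definition, and since the right-hand side of the identity in Theorem \ref{Strconv} depends polynomially on $x$ only through the factor $x^k$, evaluating at $x = 1$ replaces $x^k$ by $1$ and leaves the rest of the sum intact. This yields the claimed formula directly.

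In more detail, the plan is: first, recall the defining relation $\mathcal{C}_n := \mathcal{C}_n(1)$. Second, take the identity
\begin{equation*}
\mathcal{C}_n(x) = \sum_{k=0}^{\infty} k!\, x^k \sum_{j=0}^{n} \binom{n}{j} \left(\frac{-k}{2}\right)^j S(n-j, k)
\end{equation*}
from Theorem \ref{Strconv}, which is already established. Third, specialize $x = 1$; the factor $x^k$ collapses to $1$, and one reads off
\begin{equation*}
\mathcal{C}_n = \sum_{k=0}^{\infty} k! \sum_{j=0}^{n} \binom{n}{j} \left(\frac{-k}{2}\right)^j S(n-j, k).
\end{equation*}

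There is no obstacle here. The only sanity check worth performing is that the outer sum over $k$ is effectively finite for each fixed $n$: since $S(n-j, k) = 0$ whenever $k > n - j$, and $j$ ranges in $\{0, \dots, n\}$, the contributing terms have $k \leq n$, so the apparent infinite sum truncates and no convergence issue arises. This mirrors exactly what already happens in Theorem \ref{Strconv}, so nothing new is needed.
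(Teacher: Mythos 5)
Your proof is correct and matches the paper's (implicit) approach exactly: the corollary follows from Theorem \ref{Strconv} by setting $x=1$, just as the paper handles its other specialization corollaries. The added remark that the sum over $k$ truncates because $S(n-j,k)=0$ for $k>n-j$ is a harmless and sensible sanity check.
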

\subsection{Umbral representation}
In the following theorem we  use the  umbral notation $\C_k \equiv \C^k$, 
\begin{theorem}\label{Thm5}
Let $n$ be a non-negative integer, for all real $x$ we have 
\begin{equation*}
\C_n(x)=x\left[(\C(x)+1/2)^n-(\C(x)-1/2)^n\right].
\end{equation*}
\end{theorem}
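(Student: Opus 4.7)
The plan is to show this identity is essentially a repackaging of Theorem \ref{Thm2} in umbral notation. Expanding the right-hand side by the binomial theorem and using the umbral convention $\C^{k}\equiv\C_{k}(x)$, I would write
\begin{equation*}
x\bigl[(\C(x)+1/2)^{n}-(\C(x)-1/2)^{n}\bigr]=x\sum_{k=0}^{n}\binom{n}{k}\C_{k}(x)\bigl[(1/2)^{n-k}-(-1/2)^{n-k}\bigr].
\end{equation*}

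Next I would observe two simple features of the coefficient $(1/2)^{n-k}-(-1/2)^{n-k}$: it vanishes when $n-k$ is even, and in particular the top term ($k=n$) drops out, so the summation range reduces to $0\le k\le n-1$. This is precisely $\delta[0^{n-k}]$ as defined in Theorem \ref{Thm2}. Thus the right-hand side becomes
\begin{equation*}
x\sum_{k=0}^{n-1}\binom{n}{k}\delta[0^{n-k}]\,\C_{k}(x),
\end{equation*}
which is exactly the second form of the explicit expression for $\C_{n}(x)$ given in Theorem \ref{Thm2}.

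Since the identity reduces to a term-by-term comparison with a formula already established, there is no real obstacle; the only thing to be careful about is the umbral bookkeeping — making clear that the exponents on $\C$ inside the binomial expansion are to be lowered to indices before any arithmetic simplification, and that the vanishing of the $k=n$ contribution is what allows the sum to start at the same upper limit as in Theorem \ref{Thm2}. I would conclude with a one-line remark that setting $x=1$ recovers the analogous umbral identity for the central Fubini-like numbers, $\C_{n}=(\C+1/2)^{n}-(\C-1/2)^{n}$.
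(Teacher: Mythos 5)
Your proposal is correct and follows exactly the route the paper takes: the paper's proof is simply ``from Theorem \ref{Thm2} and the umbral notation, a simple calculation gives the result,'' and your binomial expansion with the observation that the $k=n$ term vanishes (since $\delta[0^{0}]=0$) is precisely that calculation, spelled out. Nothing further is needed.
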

\begin{proof}
From Theorem \ref{Thm2} and using the umbral notation, a simple calculation gives the umbral representation result.
\end{proof}
\begin{corollary} For non-negative integer $n$, we have 
\begin{equation*} 
\C_n=(\C+1/2)^n-(\C-1/2)^n.
\end{equation*}
\end{corollary}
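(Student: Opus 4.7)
The plan is to read the identity directly off Theorem \ref{Thm2} by unpacking $\delta[0^{n-j}]$ and invoking the binomial theorem under the umbral convention $\C_j(x)\equiv\C(x)^j$. First I would observe that although the sum in \eqref{Comp} runs only up to $j=n-1$, the missing term $j=n$ contributes nothing, since $\delta[0^0]=(1/2)^0-(-1/2)^0=0$. This cosmetic extension is what makes the binomial theorem applicable on each of the two resulting sums.

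Next I would write, using \eqref{Comp} in its symmetric form and the umbral convention,
\begin{equation*}
\C_n(x) \;=\; x\sum_{j=0}^{n}\binom{n}{j}\bigl[(1/2)^{n-j}-(-1/2)^{n-j}\bigr]\C(x)^{j}
\;=\; x\!\left[\sum_{j=0}^{n}\binom{n}{j}(1/2)^{n-j}\C(x)^{j} \;-\; \sum_{j=0}^{n}\binom{n}{j}(-1/2)^{n-j}\C(x)^{j}\right].
\end{equation*}
Then I would apply the binomial expansion to each of the two inner sums, which yields $(\C(x)+1/2)^n$ and $(\C(x)-1/2)^n$ respectively, producing the claimed identity.

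The only subtle point — and the one I would flag explicitly so the reader is not puzzled — is the validity of the umbral manipulation: the binomial expansion treats $\C(x)$ as a formal symbol and only at the very end do we reinterpret each power $\C(x)^j$ as $\C_j(x)$. Since the intermediate step is really just a linear rearrangement of the finite sum in \eqref{Comp}, no analytic issue arises, and the identity follows immediately. The corollary for the numbers $\C_n$ is then obtained by specialising $x=1$.
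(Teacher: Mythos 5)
Your proposal is correct and follows essentially the same route as the paper: the paper derives the polynomial identity $\C_n(x)=x[(\C(x)+1/2)^n-(\C(x)-1/2)^n]$ from Theorem \ref{Thm2} via the umbral binomial expansion (the ``simple calculation'' you spell out, including the harmless extension of the sum to $j=n$ since $\delta[0^0]=0$) and then obtains the corollary by setting $x=1$. You have merely made explicit the computation the paper leaves to the reader.
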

\begin{proof}
It suffices to set $x=1$ in Theorem \ref{Thm5}.
\end{proof}
\subsection{Parity}
\hfill\\
A function $f(x)$ is said to be even when  $f(x) = f(-x)$ for all $x$ and it is said to be odd when  $f(x) = -f(-x)$

\begin{theorem}\label{Thm3}
For all non-negative $n$ and real variable $x$ we have
\begin{equation*}
\C_n(x)=(-1)^n\C_n(-x).
\end{equation*}
\end{theorem}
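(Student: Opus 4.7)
The plan is to derive the parity relation directly from the exponential generating function established in Theorem \ref{PEGF}, exploiting the fact that $\sinh$ is an odd function. This reduces the claim to an elementary symmetry of the two-variable generating function $G(x;t)$.

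First I would write down
\[
G(-x;t) = \frac{1}{1 - 2(-x)\sinh(t/2)} = \frac{1}{1 + 2x\sinh(t/2)},
\]
and then use $\sinh(-t/2) = -\sinh(t/2)$ to rewrite this as
\[
G(-x;t) = \frac{1}{1 - 2x\sinh(-t/2)} = G(x;-t).
\]
In other words, replacing $x$ by $-x$ has the same effect on the generating function as replacing $t$ by $-t$.

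Next I would expand both sides as power series in $t$. On the left,
\[
G(-x;t) = \sum_{n\ge 0} \C_n(-x)\,\frac{t^n}{n!},
\]
and on the right,
\[
G(x;-t) = \sum_{n\ge 0} \C_n(x)\,\frac{(-t)^n}{n!} = \sum_{n\ge 0} (-1)^n\C_n(x)\,\frac{t^n}{n!}.
\]
Comparing coefficients of $t^n/n!$ yields $\C_n(-x) = (-1)^n\C_n(x)$, which is equivalent to the statement $\C_n(x) = (-1)^n\C_n(-x)$.

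There is no real obstacle here; the only thing to be careful about is the sign manipulation inside the $\sinh$, and that the generating function identity is an identity of formal power series so coefficient extraction is legitimate. As an alternative route, one could instead invoke the umbral representation of Theorem \ref{Thm5} and expand $x[(\C(x)+1/2)^n - (\C(x)-1/2)^n]$ under $x \mapsto -x$, but this is less direct than the generating function argument and ultimately relies on the same parity observation.
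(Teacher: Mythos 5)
Your proof is correct and follows essentially the same route as the paper: both exploit the oddness of $\sinh$ to obtain the symmetry $G(-x;t)=G(x;-t)$ (equivalently $G(x;t)=G(-x;-t)$) and then compare coefficients of $t^n/n!$. Your write-up just spells out the sign manipulations in a bit more detail.
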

\begin{proof}
It is known that the function $\sinh(t)$ is odd, this gives
$G(x;t)=G(-x;-t),$
then comparing the coefficients of $t^n/n!$ in $G(x;t)$ and $G(-x;-t)$ the theorem follows.
\end{proof}
\begin{corollary}\label{Thm4}
The polynomials $\C_n(x)$ are odd if and only if $n$ is odd and even else.
\end{corollary}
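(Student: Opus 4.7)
The plan is to deduce the corollary immediately from Theorem \ref{Thm3}, which asserts $\C_n(x) = (-1)^n \C_n(-x)$ for all non-negative integers $n$ and all real $x$.

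First, I would establish the two forward implications by a case split on the parity of $n$. If $n$ is even, then $(-1)^n = 1$ and the identity of Theorem \ref{Thm3} reduces to $\C_n(x) = \C_n(-x)$, which is precisely the definition of an even function. If $n$ is odd, then $(-1)^n = -1$, so the identity becomes $\C_n(x) = -\C_n(-x)$, showing that $\C_n(x)$ is odd.

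For the converse direction I would argue by contraposition. Suppose, for contradiction, that $n$ is even yet $\C_n(x)$ were odd. Combining $\C_n(-x) = -\C_n(x)$ with Theorem \ref{Thm3} would force $\C_n(x) = -\C_n(x)$, hence $\C_n(x) \equiv 0$. This contradicts the fact that $\C_n(x)$ has degree exactly $n$ with leading coefficient $n!\, T(n,n) = n!$, as is immediate from iterating the triangular recurrence $T(n,n) = T(n-2,n-2) = \cdots = T(0,0) = 1$ (and confirmed by the table of first values). The symmetric argument handles the case in which $n$ is odd but $\C_n(x)$ is assumed even.

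The principal obstacle is essentially absent: the statement is a direct corollary of the preceding theorem, and the only subtle point is verifying that $\C_n(x)$ is not identically zero so that the converse implication does not degenerate, which is handled by the nonvanishing leading coefficient above.
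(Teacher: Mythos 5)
Your proof is correct and follows the same route as the paper: both deduce the parity statement directly from Theorem \ref{Thm3} by splitting on whether $n$ is even or odd, in which cases the identity $\C_n(x)=(-1)^n\C_n(-x)$ reads as evenness or oddness respectively. You additionally verify the converse direction by observing that $\C_n(x)$ is not identically zero (its leading coefficient is $n!\,T(n,n)=n!$), a point the paper's one-line proof leaves implicit but which is needed for the ``if and only if'' to be non-degenerate.
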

\begin{proof}
Using Theorem \ref{Thm3}, it suffices to replace $n$ by $2k+1$ (and $2k$) and establish the property.
\end{proof}
\subsection{Recurrences and derivatives of higher order}
Now we are interested to derive some recurrences for $\C_n(x)$ in terms of their derivatives.

First, we deal with a recurrence of second order, 
\begin{theorem}\label{Thm6}
For $n\geq 2$, the polynomials $\C_n(x)$ satisfy the following recurrence,
$$\C_{n}(x)=2x^2\C_{n-2}(x)+ \left(\frac{x}{4}+4x^3\right)\C'_{n-2}(x)+\left(\frac{x^2}{4}+x^4\right)\C''_{n-2}(x).$$
\end{theorem}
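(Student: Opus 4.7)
The plan is to lift the statement to the exponential generating function $G(x;t) = (1 - 2x\sinh(t/2))^{-1}$ supplied by Theorem \ref{PEGF}. Shifting the index $n \mapsto n-2$ on the left corresponds to differentiating $G$ twice in $t$, while the operators $\C_{n-2} \mapsto \C_{n-2}$, $\C'_{n-2}$, $\C''_{n-2}$ on the right correspond to $G$, $\partial_x G$, $\partial_x^2 G$. Thus the recurrence will be equivalent to the partial differential equation
$$\partial_t^2 G \;=\; 2x^2\, G \;+\; \left(\frac{x}{4}+4x^3\right)\partial_x G \;+\; \left(\frac{x^2}{4}+x^4\right)\partial_x^2 G,$$
after which extracting the coefficient of $t^{n-2}/(n-2)!$ on each side yields the claim for every $n \geq 2$.

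Next I would compute the four partial derivatives in closed form. The chain rule applied to $G = (1-2x\sinh(t/2))^{-1}$ gives $\partial_t G = x\cosh(t/2)\,G^2$ and then
$$\partial_t^2 G \;=\; \frac{x}{2}\sinh(t/2)\,G^2 \;+\; 2x^2 \cosh^2(t/2)\,G^3,$$
whereas differentiating in $x$ produces $\partial_x G = 2\sinh(t/2)\,G^2$ and $\partial_x^2 G = 8\sinh^2(t/2)\,G^3$. The crucial simplification is that the defining equation $2x\sinh(t/2) = 1 - 1/G$ lets me express $\sinh(t/2) = (G-1)/(2xG)$, and the Pythagorean-type identity $\cosh^2(t/2) = 1 + \sinh^2(t/2)$ disposes of the remaining transcendental term. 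After these substitutions every derivative becomes a polynomial in $G$ with coefficients in $\mathbb{Q}[x,x^{-1}]$.

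Finally, I would expand both sides of the proposed PDE and check that they agree. A short calculation shows that both sides collapse to the same polynomial
$$\left(\tfrac{1}{2}+2x^2\right)G^3 \;-\; \tfrac{3}{4}G^2 \;+\; \tfrac{1}{4}G,$$
which completes the proof. The only real obstacle is algebraic bookkeeping: the factors of $x$ in the denominators that arise from $\sinh(t/2) = (G-1)/(2xG)$ and from $\sinh^2(t/2) = (G-1)^2/(4x^2 G^2)$ must be tracked carefully so that they cancel against the $x/4+4x^3$ and $x^2/4+x^4$ coefficients on the right-hand side, leaving a genuine polynomial identity in $G$ and $x$. Guessing the exact form of the coefficients $(x/4+4x^3)$ and $(x^2/4+x^4)$ in advance is the other non-trivial point, but once they are posited, the verification reduces to the routine algebraic check sketched above.
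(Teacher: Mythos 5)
Your proof is correct, but it takes a genuinely different route from the paper. You work at the level of the exponential generating function, reducing the recurrence to the partial differential equation $\partial_t^2 G=2x^2G+\left(\frac{x}{4}+4x^3\right)\partial_x G+\left(\frac{x^2}{4}+x^4\right)\partial_x^2 G$ and verifying it by eliminating $\sinh(t/2)=(G-1)/(2xG)$; I checked the algebra and both sides do collapse to $\left(\frac{1}{2}+2x^2\right)G^3-\frac{3}{4}G^2+\frac{1}{4}G$, and extracting the coefficient of $t^{n-2}/(n-2)!$ gives the theorem for $n\geq 2$. The paper instead works directly with the defining sum $\C_n(x)=\sum_k k!\,T(n,k)x^k$ and the triangular recurrence $T(n,k)=T(n-2,k-2)+\frac{k^2}{4}T(n-2,k)$, recognizing the two resulting sums as $x^2\left(x^2\C_{n-2}(x)\right)''$ and $\frac{x}{4}\left(x\C_{n-2}'(x)\right)'$. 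The main trade-off is the one you yourself flag: your argument is a verification that requires the coefficients $\frac{x}{4}+4x^3$ and $\frac{x^2}{4}+x^4$ to be posited in advance, whereas the paper's computation produces them organically from the $T(n,k)$ recurrence. On the other hand, your PDE method generalizes readily (the same technique underlies Proposition \ref{Pro2} and Theorem \ref{Thm8} in the paper) and avoids any index bookkeeping in the finite sums. Either write-up would be acceptable; if you keep yours, state explicitly the coefficient-extraction step identifying $\partial_t^2 G$ with $\sum_{n\geq 0}\C_{n+2}(x)t^n/n!$ and $\partial_x^j G$ with $\sum_{n\geq 0}\C_n^{(j)}(x)t^n/n!$, since that is the only place where the formal power-series interpretation is actually used.
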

\begin{proof}
We use the recurrence $\displaystyle T(n,k)=T(n-2,k-2)+\frac{k^2}{4}T(n-2,k)$, in fact 
\begin{equation*}
\begin{split}
 \C_n(x)&=\sum_{k=0}^{n}k!T(n,k)x^k  \\
&  =\sum_{k=2}^{n}k!T(n-2,k-2)x^k+\frac{1}{4}\sum_{k=0}^{n}k^2k!T(n-2,k)x^k  \\
&  =\sum_{k=0}^{n}(k+2)!T(n-2,k)x^{k+2}+\frac{x}{4}  \left( \sum_{k=0}^{n}kk!T(n-2,k)x^k\right)' \\
&=x^2 \left( x^2\sum_{k=0}^{n}k!T(n-2,k)x^{k} \right)''+\frac{x}{4} \left(x\left(\sum_{k=0}^{n}k!T(n-2,k)x^k\right)'\right)'\\
& =x^2\left(x^2\C_{n-2}(x)\right)''+\frac{x}{4}\left(x\C'_{n-2}(x)\right)' \\
&=2x^2\C_{n-2}(x)+ \left(\frac{x}{4}+4x^3\right)\C'_{n-2}(x)+\left(\frac{x^2}{4}+x^4\right)\C''_{n-2}(x),
\end{split}
\end{equation*}

this concludes the proof.
\end{proof}

In the next Theorem we give a recurrence formula for the $rth$ derivative of $\C_n(x)$ 
\begin{proposition}\label{Pro2}
The $r^{th}$ derivative of $G(x;t)$ is given by,
$$\frac{\mathrm{d}^r}{\mathrm{d}^rx} G(x;t)=\frac{r!}{x^r}G(x;t)(G(x;t)-1)^r.$$
\end{proposition}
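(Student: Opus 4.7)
The plan is to prove Proposition \ref{Pro2} by induction on $r$, after first handling the base case with a direct differentiation.

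For the base case $r=1$, I would write $G(x;t) = (1 - 2x\sinh(t/2))^{-1}$ and differentiate with respect to $x$ to get
$$\frac{\partial G}{\partial x} = \frac{2\sinh(t/2)}{(1-2x\sinh(t/2))^2} = 2\sinh(t/2)\,G(x;t)^2.$$
The key algebraic simplification is the identity
$$G(x;t) - 1 = \frac{1}{1-2x\sinh(t/2)} - 1 = \frac{2x\sinh(t/2)}{1-2x\sinh(t/2)} = 2x\sinh(t/2)\,G(x;t),$$
which yields $2\sinh(t/2)\,G(x;t) = (G(x;t)-1)/x$, and consequently
$$\frac{\partial G}{\partial x} = \frac{G(x;t)(G(x;t)-1)}{x},$$
matching the claim for $r=1$.

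For the inductive step, assuming the formula holds at level $r$, I would differentiate
$$G^{(r)} = \frac{r!}{x^r}G(G-1)^r$$
with respect to $x$. The product/chain rule produces three terms: one from differentiating $x^{-r}$, one from differentiating $G$, and one from differentiating $(G-1)^r$. Substituting $G' = G(G-1)/x$ in the latter two terms and factoring out $\frac{r!}{x^{r+1}}$ gives
$$G^{(r+1)} = \frac{r!}{x^{r+1}}\Bigl[-rG(G-1)^r + G(G-1)^{r+1} + rG^2(G-1)^r\Bigr].$$
The two outer terms combine via $rG^2(G-1)^r - rG(G-1)^r = rG(G-1)^{r+1}$, so the bracket collapses to $(r+1)G(G-1)^{r+1}$, delivering the $r+1$ case.

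I do not expect any serious obstacle: the only subtlety is spotting the identity $G-1 = 2x\sinh(t/2)\,G$, which converts the appearance of $\sinh(t/2)$ (a function of $t$, not $x$) into a polynomial expression purely in $G$ and $x$. Once that rewriting is in hand, both the base case and the induction step reduce to elementary algebra.
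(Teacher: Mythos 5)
Your proof is correct and follows exactly the route the paper intends: the paper's own proof consists of the single line ``Induction on $r$ implies the equality,'' and your argument supplies the details of that induction, including the key rewriting $G-1=2x\sinh(t/2)\,G$ needed to make the base case and inductive step close. Both computations check out, so your write-up is simply a fleshed-out version of the paper's proof.
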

\begin{proof}
Induction on $r$ implies the equality.
\end{proof}
\begin{theorem}\label{Thm8}
Let $\C_n^{(r)}(x)$ be the $rth$ derivative of $\C_n(x)$. Then $\C_n^{(r)}(x)$ is given by,
{\small $$\C_n^{(r)}(x)=\frac{r!}{x^{r}}\sum_{k=0}^{r}{r \choose k}(-1)^{r-k}\sum_{{\tiny j_0+j_1+\cdots+j_{k}=n}}{n \choose j_0,j_1,\dots,j_{{\tiny k}}} \C_{j_0}(x)\C_{j_1}(x) \cdots \C_{j_{k}}(x).$$}
\end{theorem}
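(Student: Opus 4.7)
The plan is to obtain the formula by differentiating the exponential generating function $G(x;t)$ $r$ times with respect to $x$ and then reading off the coefficient of $t^n/n!$.

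First I would invoke Proposition \ref{Pro2}, which gives
$$\frac{d^r}{dx^r}G(x;t)=\frac{r!}{x^r}\,G(x;t)\bigl(G(x;t)-1\bigr)^r.$$
Next, I would expand the factor $(G(x;t)-1)^r$ by the binomial theorem, obtaining
$$G(x;t)\bigl(G(x;t)-1\bigr)^r=\sum_{k=0}^{r}\binom{r}{k}(-1)^{r-k}G(x;t)^{k+1}.$$

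The second step is to compute $G(x;t)^{k+1}$ as an EGF. Since $G(x;t)=\sum_{n\ge 0}\C_n(x)\,t^n/n!$, the multinomial form of Cauchy's product yields
$$G(x;t)^{k+1}=\sum_{n\ge 0}\frac{t^n}{n!}\sum_{j_0+j_1+\cdots+j_k=n}\binom{n}{j_0,j_1,\dots,j_k}\C_{j_0}(x)\C_{j_1}(x)\cdots\C_{j_k}(x).$$
Combining this with the binomial expansion above, $\dfrac{d^r}{dx^r}G(x;t)$ becomes a single series in $t$ whose coefficient of $t^n/n!$ is exactly the right-hand side of the claimed identity divided by $r!/x^r$ (the prefactor being pulled out since it does not depend on $t$).

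Finally I would identify the left-hand side by differentiating the series $G(x;t)=\sum_n \C_n(x)t^n/n!$ termwise in $x$, which gives $\frac{d^r}{dx^r}G(x;t)=\sum_n \C_n^{(r)}(x)\,t^n/n!$; matching coefficients of $t^n/n!$ then delivers the theorem. The computation is essentially mechanical, with no real obstacle; the only points that need a bit of care are getting the multinomial index set right (it must have $k+1$ parts for $G^{k+1}$, matching the subscripts $j_0,\ldots,j_k$ in the statement) and ensuring the $x^r$ in the denominator of Proposition \ref{Pro2} survives the differentiation, which it does because it does not depend on $t$.
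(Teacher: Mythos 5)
Your proposal is correct and follows exactly the route the paper itself takes: apply Proposition \ref{Pro2}, expand $(G(x;t)-1)^r$ binomially, compute the powers $G(x;t)^{k+1}$ via the Cauchy (multinomial) product, and compare coefficients of $t^n/n!$. The paper's proof is just a one-line sketch of this same argument, so your write-up is essentially a fleshed-out version of it.
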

\begin{proof}
Using Proposition \ref{Pro2}, by applying Cauchy product and comparing the coefficients of $t^n/n!$, we get the result.
\end{proof}
\begin{corollary}\label{Thm7}
The following equality holds for any real $x$,
$$x\C_n'(x)=\sum_{k=0}^{n-1}{n \choose k} \C_k(x)\C_{n-k}(x).$$
\end{corollary}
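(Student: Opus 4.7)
The plan is to specialize Theorem \ref{Thm8} to $r=1$, although it is just as clean to go directly from Proposition \ref{Pro2} with $r=1$, which yields the compact identity
\[
x\,\frac{\partial}{\partial x}G(x;t)=G(x;t)\bigl(G(x;t)-1\bigr)=G(x;t)^2-G(x;t).
\]
So I would start from this functional identity and then extract the coefficient of $t^n/n!$ on both sides.

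On the left-hand side, differentiating $G(x;t)=\sum_{n\geq 0}\C_n(x)\,t^n/n!$ termwise and multiplying by $x$ gives coefficient $x\,\C_n'(x)$. On the right-hand side, the Cauchy product of the exponential generating function $G(x;t)$ with itself produces
\[
G(x;t)^2=\sum_{n\geq 0}\Bigl(\sum_{k=0}^{n}\binom{n}{k}\C_k(x)\,\C_{n-k}(x)\Bigr)\frac{t^n}{n!},
\]
so the coefficient of $t^n/n!$ in $G(x;t)^2-G(x;t)$ is $\sum_{k=0}^{n}\binom{n}{k}\C_k(x)\C_{n-k}(x)-\C_n(x)$.

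The final step, which is really the only place where any manipulation is needed, is to use $\C_0(x)=1$ to separate off the boundary term: the $k=n$ summand equals $\binom{n}{n}\C_n(x)\C_0(x)=\C_n(x)$, which cancels exactly the $-\C_n(x)$ correction. What remains is $\sum_{k=0}^{n-1}\binom{n}{k}\C_k(x)\C_{n-k}(x)$, which matches the claimed formula. Equating coefficients yields the stated identity, and since the manipulation is entirely formal this is the only subtle point in the argument; no analytic obstacle is expected.
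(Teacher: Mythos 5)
Your proposal is correct and follows essentially the same route as the paper: specializing Proposition \ref{Pro2} to $r=1$ to obtain $x\,\partial_x G = G^2 - G$, expanding $G^2$ by the Cauchy product, and comparing coefficients of $t^n/n!$. The only difference is that you make explicit the cancellation of the $k=n$ term against $-\C_n(x)$ using $\C_0(x)=1$, a detail the paper leaves implicit.
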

\begin{proof}
Setting $r=1$ in Proposition \ref{Pro2} gives the first derivative of $G(x;t)$ as,
\begin{equation*}
\begin{split}
\frac{\mathrm{d}}{\mathrm{dx}}G(x;t)&=\frac{2\sinh\left(\frac{t}{2}\right)}{\left(1-2x\sinh(\frac{t}{2})\right)^2}=\frac{G(x;t)}{x}\left(G(x;t)-1\right),\\
x\frac{\mathrm{d}}{\mathrm{dx}}G(x;t)&=G(x;t)^2-G(x;t),\\
x\sum_{n \geq 0}\sum_{k=0}^{n}\C'_n(x)\frac{t^n}{n!}&=\left(\sum_{n \geq 0}\sum_{k=0}^{n}\C_n(x)\frac{t^n}{n!}\right)^2-\sum_{n \geq 0}\sum_{k=0}^{n}\C_n(x)\frac{t^n}{n!},
\end{split}
\end{equation*}
then applying the Cauchy product in the right hand side and comparing the coefficients of $t^n/n!$ we get the result.
\end{proof}
\subsection{Integral representation}
Integral representation is a fundamental property in analytic combinatorics.
The central Fubini-like polynomials can be represented by,
\begin{theorem}\label{Thm9}
The polynomials $\C_{n}(x)$ satisfy,
$$\C_{n}(x)=\frac{2 n!}{\pi } \mathbf{Im} \int_0^{\pi } \frac{\sin (n \theta)}{1-2x \sinh \left(e^{i \theta }/2\right)} \,  \mathrm{d}\theta. $$
\end{theorem}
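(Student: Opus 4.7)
The plan is to treat $\C_n(x)/n!$ as the $n$-th Taylor coefficient of the exponential generating function $G(x;t)=1/(1-2x\sinh(t/2))$ from Theorem \ref{PEGF} and extract it through Fourier orthogonality along the unit circle $t=e^{i\theta}$. Substituting and splitting into real and imaginary parts gives
$$G(x;e^{i\theta})=\sum_{k\geq 0}\frac{\C_k(x)}{k!}\bigl[\cos(k\theta)+i\sin(k\theta)\bigr],$$
and since $\C_k(x)\in\mathbb{R}$ for real $x$, taking imaginary parts yields the Fourier sine series
$$\mathbf{Im}\,G(x;e^{i\theta})=\sum_{k\geq 1}\frac{\C_k(x)}{k!}\sin(k\theta).$$

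The next step is to multiply by $\sin(n\theta)$, integrate over $[0,\pi]$, and invoke the orthogonality relation $\int_0^\pi \sin(k\theta)\sin(n\theta)\,d\theta=\tfrac{\pi}{2}\delta_{kn}$. Every term except $k=n$ collapses, leaving
$$\int_0^\pi \sin(n\theta)\,\mathbf{Im}\,G(x;e^{i\theta})\,d\theta=\frac{\pi}{2}\cdot\frac{\C_n(x)}{n!}.$$
Because $\sin(n\theta)$ is real it commutes with $\mathbf{Im}$, so the left-hand side equals $\mathbf{Im}\int_0^\pi \sin(n\theta)\,G(x;e^{i\theta})\,d\theta$. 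Solving for $\C_n(x)$ and inserting the closed form of $G$ from Theorem \ref{PEGF} produces exactly the stated identity.

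The main technical hurdle will be justifying that $G(x;\cdot)$ is holomorphic on a neighborhood of the closed unit disk so that the series converges uniformly on $|t|=1$ and can be integrated termwise. This requires $1-2x\sinh(t/2)$ to stay nonzero on $|t|\le 1$, which holds at least for $x$ in a neighborhood of $0$; since both sides are analytic in $x$ on their domain of definition, the identity then extends by analytic continuation. A cleaner alternative route avoiding the Fourier bookkeeping is to start from Cauchy's coefficient formula $\C_n(x)/n!=(2\pi i)^{-1}\oint t^{-n-1}G(x;t)\,dt$, parametrize by $t=e^{i\theta}$ on $[0,2\pi]$, and then fold the interval to $[0,\pi]$ using the conjugate symmetry $G(x;e^{-i\theta})=\overline{G(x;e^{i\theta})}$; the fold automatically generates the factor $2$ and the operator $\mathbf{Im}$ on the right-hand side.
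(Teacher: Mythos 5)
Your coefficient-extraction argument (Fourier orthogonality on the unit circle, equivalently Cauchy's coefficient formula folded to $[0,\pi]$) is a genuinely different route from the paper's. The paper starts from the explicit expansion $\C_n(x)=\sum_{k}x^k\sum_{j=0}^{k}(-1)^j\binom{k}{j}(k/2-j)^n$, applies Ces\`aro's integral formula $m^n=\frac{2n!}{\pi}\mathbf{Im}\int_0^{\pi}e^{me^{i\theta}}\sin(n\theta)\,\mathrm{d}\theta$ termwise, and then resums the geometric series $\sum_{k\ge 0}\bigl(2x\sinh(e^{i\theta}/2)\bigr)^k$ under the integral sign. On the range where both arguments are legitimate, namely $|x|<1/(2\sinh(1/2))$ so that $1-2x\sinh(t/2)$ has no zero in $|t|\le 1$, yours is the cleaner one: it needs only uniform convergence of the Taylor series of $G(x;\cdot)$ on $|t|=1$, rather than an interchange of two infinite processes followed by a resummation.

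However, the hurdle you flag at the end is not a formality, and your proposed repair by analytic continuation in $x$ does not go through. The set of $x$ for which the integrand is singular at some $\theta$ is the closed curve $\{1/(2\sinh(e^{i\theta}/2)):\theta\in[-\pi,\pi]\}$ (the arc for $\theta\in[0,\pi]$ together with its complex conjugate); it meets the real axis exactly at $\pm 1/(2\sinh(1/2))\approx\pm 0.9597$ and separates $x=0$ from $x=1$, so the identity established near $x=0$ cannot be continued across it. Indeed the right-hand side, as a function of real $x$, is continuous but not real-analytic at those two points: once $|x|>1/(2\sinh(1/2))$ the pole $t^{*}=2\sinh^{-1}(1/(2x))$ of $G(x;\cdot)$ moves inside the unit circle, and a residue computation shows the right-hand side then equals $\C_n(x)-\frac{n!}{x\cosh(t^{*}/2)}\bigl(t^{*-n-1}-t^{*\,n-1}\bigr)$ rather than $\C_n(x)$. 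At $x=1$, $n=1$ this gives $1-\frac{2}{\sqrt{5}}\bigl((2\log\phi)^{-2}-1\bigr)\approx 0.93$, in agreement with a direct numerical evaluation of the integral but not with $\C_1=1$; so the identity genuinely fails at $x=1$. You should not read this as a defect of your approach alone: the paper's own proof silently sums a geometric series that diverges for $\theta$ near $0$ once $|2x\sinh(1/2)|\ge 1$, so it carries the identical unacknowledged gap, and the theorem should be understood as valid only for $|x|<1/(2\sinh(1/2))$ (and $n\ge 1$).
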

\begin{proof}
We will use here the known identity, see \cite{Ca},
$$k^n=\frac{2n!}{\pi}\mathbf{Im} \int_0^{\pi}\exp{({ke^{i \theta}})}\sin(n\theta)d\theta.$$
we have,
\begin{equation*}
\begin{split}
\C_n(x)&=\sum_{k = 0}^{\infty}k!T(n,k)x^k\\&=\sum_{k = 0}^{\infty}x^k\sum_{j = 0}^{k}(-1)^j{k \choose j} \left( \frac{k}{2}-j \right)^n\\
&=\sum_{k = 0}^{\infty}x^k\sum_{j = 0}^{k}(-1)^j{k \choose j}\frac{2n!}{\pi}\mathbf{Im} \int_0^{\pi}\exp{\left((k/2-j)e^{i \theta}\right)}\sin(n\theta)\mathrm{d}\theta\\
&=\frac{2n!}{\pi}\mathbf{Im} \int_0^{\pi} \sin(n\theta)\sum_{k = 0}^{\infty}x^k \exp{\left(-\frac{k}{2}e^{i\theta}\right)}\left(\exp{(e^{i\theta})} - 1\right)^k\mathrm{d}\theta\\
&=\frac{2n!}{\pi}\mathbf{Im} \int_0^{\pi}\frac{\sin(n\theta)}{1-2 x \sinh \left(e^{i\theta }/2\right)}\mathrm{d}\theta.
\end{split}
\end{equation*}
\end{proof}

\subsection{Determinantal representation}
A determinantal representation of a sequence $(a_n)_{n\geq 0}$ is the expression of this one as a determinant of a given matrix $M$. Several papers have been published on determinantal representations of many sequences as Bernoulli numbers, Euler numbers, ordered Bell numbers (or Fubini numbers),...

Komatsu and Ram\'irez in a recent paper gives the following Theorem,
\begin{theorem}[Komatsu \& Ram\'irez \cite{Ku}]

Let $\{ \alpha_n\}_{n\geq 0}$ be a sequence with $\alpha_0=1$, and $R(j)$ be a function independent of $n$. Then
\begin{equation} \label{Matrix}
\alpha_n = \begin{vmatrix}
R(1)&1&&&\\
R(2)&R(1)&&&\\
\vdots&\vdots&\ddots&1&\\
R(n-1)&R(n-2)&\cdots&R(1)&1\\
R(n)&R(n-1)&\cdots&R(2)&R(1)\\
\end{vmatrix}.
\end{equation}
if and only if 
\begin{equation}\label{CompF}
\alpha_n=\sum_{j=1}^{n}(-1)^{j-1}R(j)\alpha_{n-j} \qquad (n \geq 1)
\end{equation}
with $\alpha_0$.
\end{theorem}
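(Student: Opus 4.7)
\medskip

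\noindent\textbf{Proof proposal.} Let $D_n$ denote the determinant on the right-hand side of \eqref{Matrix} (with $D_0:=1$ for the empty determinant). Since both the recurrence \eqref{CompF} with initial condition $\alpha_0=1$ and the determinantal formula produce a uniquely determined sequence, it suffices to prove a single implication: namely, that $D_n$ itself satisfies \eqref{CompF}. The plan is to do a cofactor expansion of $D_n$ along its last row and exploit the lower Hessenberg structure of the matrix.

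The matrix in \eqref{Matrix} is lower Hessenberg: the $(i,k)$ entry equals $R(i-k+1)$ for $k\le i$, equals $1$ for $k=i+1$, and equals $0$ for $k>i+1$. Expanding $D_n$ along row $n$ yields
\begin{equation*}
D_n=\sum_{j=1}^{n}(-1)^{n+j}R(n-j+1)\,\det(M_j),
\end{equation*}
where $M_j$ is the $(n-1)\times(n-1)$ submatrix obtained by deleting row $n$ and column $j$. The key observation is that $M_j$ has a block decomposition
\begin{equation*}
M_j=\begin{pmatrix}A & 0\\ B & C\end{pmatrix},
\end{equation*}
in which $A$ is the top-left $(j-1)\times(j-1)$ principal submatrix of the original matrix, hence $\det(A)=D_{j-1}$; the top-right block is zero because in rows $1,\dots,j-1$ all entries in columns $>j$ lie strictly above the superdiagonal; and the bottom-right block $C$ is itself $(n-j)\times(n-j)$ with $1$'s on its diagonal and zeros strictly above (a direct index check on the surviving superdiagonal entries $(j,j+1),(j+1,j+2),\dots$), so that $\det(C)=1$. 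Therefore $\det(M_j)=D_{j-1}$.

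Substituting this back and reindexing by $i=n-j+1$ produces
\begin{equation*}
D_n=\sum_{j=1}^{n}(-1)^{n+j}R(n-j+1)D_{j-1}=\sum_{i=1}^{n}(-1)^{i-1}R(i)D_{n-i},
\end{equation*}
after absorbing $(-1)^{2n}=1$. This is exactly \eqref{CompF} with $\alpha$ replaced by $D$, and since $D_0=1=\alpha_0$, an induction on $n$ yields $\alpha_n=D_n$ for all $n$, proving both directions of the equivalence.

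The only real subtlety is verifying the block decomposition of $M_j$ — in particular checking that the bottom-right block is upper unitriangular after the superdiagonal $1$ at position $(j,j+1)$ has been shifted into the diagonal by the column deletion. This is an index-bookkeeping step rather than a conceptual difficulty, and once it is done the sign simplification and the final induction are routine.
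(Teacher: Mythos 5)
The paper offers no proof of this statement at all: it is imported verbatim from Komatsu and Ram\'irez \cite{Ku} and used as a black box, so there is nothing internal to compare your argument against. On its own merits your proof is correct and complete. The cofactor expansion of the lower Hessenberg determinant along its last row, combined with the block-triangular factorization $\det(M_j)=\det(A)\det(C)=D_{j-1}\cdot 1$, gives exactly $D_n=\sum_{i=1}^{n}(-1)^{i-1}R(i)D_{n-i}$, and the uniqueness-plus-induction step legitimately upgrades that single identity to the stated equivalence (both directions follow because the recurrence with $\alpha_0=1$ determines the sequence uniquely and $D$ satisfies it). This is the standard argument for Hessenberg/Trudi-type determinant recurrences and is essentially the proof in the cited source. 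Two cosmetic remarks only: your closing paragraph calls the block $C$ ``upper unitriangular,'' whereas your own (correct) description --- ones on the diagonal, zeros strictly above, the surviving $R$-values below --- makes it \emph{lower} unitriangular; either way $\det(C)=1$. And the sign bookkeeping $(-1)^{n+j}=(-1)^{i-1}$ under $i=n-j+1$ checks out, as one can confirm against the cases $n=2,3$.
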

By applying the previous Theorem we get,
\begin{theorem}
For $n \geq 1$, we have
\begin{equation}
\frac{\C_n(x)}{n!}= \begin{vmatrix}
R(1)&1&&&\\
R(2)&R(1)&&&\\
\vdots&\vdots&\ddots&1&\\
R(n-1)&R(n-2)&\cdots&R(1)&1\\
R(n)&R(n-1)&\cdots&R(2)&R(1)\\
\end{vmatrix}.
\end{equation}
where $$R(j)=x\frac{(-1)^{j-1}}{j!}\delta [0^{j}]=x\frac{(-1)^{j-1}}{j!}\left(\left(\frac{1}{2}\right)^j-\left(-\frac{1}{2}\right)^j\right).$$
\end{theorem}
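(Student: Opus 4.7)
The plan is to apply the Komatsu--Ram\'irez criterion with $\alpha_n := \C_n(x)/n!$. The initial condition $\alpha_0 = 1$ holds automatically, since $\C_0(x) = 0!\,T(0,0) = 1$, so the entire task reduces to verifying the composition formula \eqref{CompF}, namely
\[
\frac{\C_n(x)}{n!} \;=\; \sum_{j=1}^{n}(-1)^{j-1} R(j)\,\frac{\C_{n-j}(x)}{(n-j)!}.
\]
Once this is in place, the displayed Hessenberg determinant equals $\alpha_n = \C_n(x)/n!$ directly from the cited theorem.

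The first step is to substitute the definition $R(j)=x(-1)^{j-1}\delta[0^{j}]/j!$ into the right-hand side and notice that the two sign factors $(-1)^{j-1}$ multiply to $1$. Clearing the factorial denominators by multiplying through by $n!$ then recasts the identity in the clean convolution form
\[
\C_n(x) \;=\; x\sum_{j=1}^{n}\binom{n}{j}\,\delta[0^{j}]\,\C_{n-j}(x).
\]

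The key step is to recognize this as a reindexing of Theorem \ref{Thm2}: formula \eqref{Comp} reads $\C_n(x)=x\sum_{k=0}^{n-1}\binom{n}{k}\delta[0^{n-k}]\C_{k}(x)$, and the substitution $k=n-j$ together with the symmetry $\binom{n}{n-j}=\binom{n}{j}$ converts one sum into the other. Hence the required composition identity holds, and the Komatsu--Ram\'irez theorem yields the determinantal representation immediately.

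I do not foresee any real obstacle: the whole argument is sign bookkeeping plus an index reflection, and the non-trivial ingredient, the explicit recurrence \eqref{Comp}, has already been established. The only caveat worth flagging in the write-up is the verification $\C_0(x)=1$, which is immediate from the definition of $\C_n(x)$ together with $T(0,0)=1$ and ensures the hypothesis $\alpha_0=1$ of the Komatsu--Ram\'irez theorem is met.
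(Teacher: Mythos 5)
Your argument is correct and follows exactly the route of the paper: reindex the recurrence of Theorem \ref{Thm2} via $j \mapsto n-j$, divide by $n!$ to obtain the composition formula \eqref{CompF} with $\alpha_n = \C_n(x)/n!$ and the stated $R(j)$, and invoke the Komatsu--Ram\'irez theorem. Your explicit handling of the sign cancellation $(-1)^{j-1}R(j)$ and the check that $\alpha_0=1$ are details the paper leaves implicit, but the proof is the same.
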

\begin{proof}
From Theorem \ref{Thm2} we have,
\begin{equation*}
\begin{split}
\C_n(x)&=x\sum_{j=0}^{n-1}{n \choose j}\delta [0^{n-j}]\C_j(x)=x\sum_{j=1}^{n}{n \choose j}\delta [0^{j}]\C_{n-j}(x)\\
\frac{\C_n(x)}{n!}&=\sum_{j=1}^{n}\frac{x}{j!}\delta [0^{j}]\frac{\C_{n-j}(x)}{(n-j)!}\\
\end{split}
\end{equation*}
It suffices to set $\alpha_n=\frac{\C_n(x)}{n!}$ and $R(j)=x\frac{(-1)^{j-1}}{j!}\delta [0^{j}]$
to get the result.
\end{proof}
\begin{remark}\label{RemR}
The function $R(j)=0$ if $j$ is even.
\end{remark}
Using the Remark $\ref{RemR}$, we can give a new binomial convolution for  $\C_n(x)$ polynomials,
\begin{theorem}For $n \geq 0$ we have,
\begin{equation}\label{Compp} 
\C_{n+1}(x)=x\sum_{k=0}^{\lfloor\frac{n}{2}\rfloor}4^{-k}{n+1 \choose 2k+1}\C_{n-2k}(x),
\end{equation}
\end{theorem}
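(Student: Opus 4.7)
The plan is to obtain this identity as a direct reindexing of the binomial convolution already proved in Theorem~\ref{Thm2}, combined with the parity observation from Remark~\ref{RemR}. The whole argument is computational bookkeeping; the only substance is isolating the odd-index contributions in the sum $\sum_{j}\binom{n}{j}\delta[0^{n-j}]\C_j(x)$.

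First I would apply Theorem~\ref{Thm2} with $n+1$ in place of $n$, writing
$$
\C_{n+1}(x)=x\sum_{j=0}^{n}\binom{n+1}{j}\delta[0^{n+1-j}]\,\C_j(x).
$$
Next I would change the summation variable to the gap $m=n+1-j$ (so $j=n+1-m$), which turns the identity into
$$
\C_{n+1}(x)=x\sum_{m=1}^{n+1}\binom{n+1}{m}\delta[0^{m}]\,\C_{n+1-m}(x).
$$

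Now I invoke Remark~\ref{RemR}, which records that $\delta[0^{m}]=(1/2)^{m}-(-1/2)^{m}$ vanishes whenever $m$ is even. Only odd values $m=2k+1$ contribute, and for these a one-line calculation gives
$$
\delta[0^{2k+1}]=(1/2)^{2k+1}-(-1/2)^{2k+1}=2\cdot(1/2)^{2k+1}=4^{-k}.
$$
Restricting the sum to $m=2k+1$ and observing that $m\le n+1$ is equivalent to $k\le n/2$, i.e.\ $k\le\lfloor n/2\rfloor$, the index $n+1-m=n-2k$ appears on $\C$, and one reads off exactly the stated formula.

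The argument has no real obstacle: the only thing to be careful about is the correct translation of the summation range after the substitution $m=2k+1$, so that the upper limit becomes $\lfloor n/2\rfloor$ rather than, say, $\lfloor(n+1)/2\rfloor$; the parity argument via Remark~\ref{RemR} makes that bookkeeping automatic.
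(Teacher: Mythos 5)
Your proposal is correct and follows essentially the same route as the paper: the paper invokes the recurrence \eqref{CompF} with $\alpha_n=\C_n(x)/n!$ and $R(j)=x\frac{(-1)^{j-1}}{j!}\delta[0^j]$, which is exactly the reindexed form of Theorem~\ref{Thm2} you start from, and then kills the even-index terms via Remark~\ref{RemR}. Your version simply makes explicit the reindexing $m=n+1-j$, the evaluation $\delta[0^{2k+1}]=4^{-k}$, and the range translation that the paper leaves to the reader.
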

\begin{proof}
From Remark \ref{RemR} and using Formula \eqref{CompF} with $\alpha_n=\C_n(x)/n!$ and $R(j)=x\frac{(-1)^{j-1}}{j!}\left(\left(\frac{1}{2}\right)^j-\left(-\frac{1}{2}\right)^j\right)$ we can establish the result.
\end{proof}
\begin{remark}
Formula \eqref{Compp} is better than result of Theorem \eqref{Thm2} from a computational point of view.
\end{remark}
\subsection{Asymptotic result with respect to $\C_n$}

Find an asymptotic behaviour a sequence $(a_n)_{n\geq 0}$ means to find a second function of $n$ simple than $a_n$ which gives a good approximation to the values of $a_n$ when $n$ is large.

In this subsection, we are interested to obtain the asymptotic behaviour of the central  Fubini-like numbers.

Let $(a_n)_{n \geq 0} $ be a sequence of non-negative real numbers, the asymptotic behaviour $a_n$ is closely tied to the poles in $G(z)$, where $G(z)$ is the generating function of $a_n$,
$$G(z)=\sum_{n \geq 0}a_n z^n.$$

Wilf, in his book \cite{Wi}, gives a method to determine the asymptotic behaviour $a_n$ which can be summarized in the following steps,
\begin{itemize}
\item[-] Find the poles $z_0,z_1,\dots,z_s$ in $G(z)$ .
\item[-] Calculate the principal parts $P(G(z),z_i)$ at each pole $z_i$ as bellow,
$$P(G(z),z_i)=\frac{Res(G(z),z_i)}{(z-z_i)},$$
where $Res(G(z),z_i)$ is the residue of $G(z)$ at the pole $z_i$.
\item[-]Set $H(z)=\sum_{i=0}^{s}P(G(z),z_i)$ then write $H(z)$ as the expansion bellow,
$$H(z)=\sum_{n \geq 0}b_n z^n,$$
\item[-]The sequence $(b_n)_{n \geq 0}$ is the asymptotic behavior $a_n$ when $n$ is big enough,
$$a_n \sim b_n + O\left(\left(\frac{1}{R}\right)^n\right), \quad n \longmapsto \infty.$$ 
where $R$ the smallest modulus of the poles.
\end{itemize} 
\begin{remark}
Poles $z_0,z_1,\dots,z_s$ are considered as simple poles (has a multiplicity equal to $1$).
\end{remark}

Analytic methods of determining the asymptotic behavior of a sequence $a_n$ are widely discussed on \cite{Be,Od,PR,Wi}.
\begin{theorem} Asymptotic behaviour of the $\C_n$ is given by  
$$\C_n \sim  \frac{ n!}{2^{n}\sqrt{5}} \mathfrak{Re}\left(\log ^{-n-1}(\phi )-(i \pi +\log(\phi -1
   ))^{-n-1}\right)+O\left(n! (0.053)^n\right), \quad n \longmapsto \infty $$
where $\phi$ is the \textit{Golden ratio}.   
\end{theorem}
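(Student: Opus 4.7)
The plan is to follow the singularity-analysis recipe recalled just before the theorem: locate the dominant poles of the exponential generating function
$$G(t)=\sum_{n\ge 0}\C_n\frac{t^n}{n!}=\frac{1}{1-2\sinh(t/2)}$$
furnished by Corollary \ref{NEGF}, compute the residues there, assemble the principal parts, and read off the coefficients of $t^n$. Multiplication by $n!$ then produces $\C_n$, and the stated error $O(n!(0.053)^n)$ comes from the first omitted pole.

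First I would locate the poles by solving $\sinh(t/2)=1/2$. Setting $y=e^{t/2}$ turns this into $y^2-y-1=0$, so $y=\phi$ or $y=1-\phi=-1/\phi$. Passing to complex logarithms gives the full pole set
$$\{2\log\phi+4\pi ik:\,k\in\mathbb Z\}\cup\{2\log(\phi-1)+2\pi i(2k+1):\,k\in\mathbb Z\},$$
all simple. Ordering by modulus, the nearest pole is $t_0=2\log\phi$; next comes the conjugate pair $t_{\pm 1}=2\log(\phi-1)\pm 2\pi i$ at modulus $2\sqrt{\log^2\phi+\pi^2}$; the layers beyond sit at modulus $\approx 12.6$ and then $\approx 18.87$, and the reciprocal of the latter is the $0.053$ appearing in the error term.

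Next I would compute the residues. Since $\tfrac{d}{dt}(1-2\sinh(t/2))=-\cosh(t/2)$, every residue equals $-1/\cosh(t_j/2)$. The key algebraic identity is $\phi(\phi-1)=1$, whence $\cosh(\log\phi)=\tfrac12(\phi+(\phi-1))=\tfrac{\sqrt5}{2}$, and since $\cosh(x+i\pi)=-\cosh x$ also $\cosh(\log(\phi-1)+i\pi)=-\tfrac{\sqrt5}{2}$. Therefore the residue at $t_0$ is $-2/\sqrt5$ and at each of $t_{\pm 1}$ it is $2/\sqrt5$. The principal part $\mathrm{Res}(G,t_j)/(t-t_j)$ expands as $-\sum_n \mathrm{Res}(G,t_j)\,t_j^{-n-1}\,t^n$, so the contribution to $[t^n]G(t)$ from a pole $t_j$ is $-\mathrm{Res}(G,t_j)/t_j^{n+1}$. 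Summing over $t_0$ and combining the conjugate pair $t_{\pm 1}$ via $2\mathfrak{Re}(\cdot)$ then produces, after multiplication by $n!$, the closed form stated in the theorem.

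The main obstacle is the tail estimate: one must justify that replacing $G$ by the finite sum of principal parts at the poles of modulus $\le R$ leaves a function analytic in $|t|<R'$ (with $R'$ the next pole radius) whose Taylor coefficients are uniformly $O(R'^{-n})$. This is Wilf's standard contour argument; the care needed is only to pick $R'$ just below the pole layer at modulus $\approx 18.87$ to recover the announced $(0.053)^n$ decay. The residue simplifications and the real-part combination are otherwise mechanical once the pole locations and the identity $\cosh(\log\phi)=\sqrt5/2$ are in hand.
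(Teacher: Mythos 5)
Your overall strategy is exactly the one the paper uses --- its proof is literally ``apply the previous steps to $G(t)=\frac{1}{1-2\sinh(t/2)}$'' --- and your execution of the pole and residue computations is correct: solving $e^{t/2}=\phi$ or $e^{t/2}=-(\phi-1)$ gives the simple poles $2\log\phi+4\pi i k$ and $2\log(\phi-1)+2\pi i(2k+1)$; the residues are $-1/\cosh(t_j/2)=\mp 2/\sqrt{5}$ via $\cosh(\log\phi)=\tfrac12(\phi+\phi^{-1})=\sqrt{5}/2$; and each pole $t_j$ contributes $-n!\,\mathrm{Res}(G,t_j)\,t_j^{-n-1}$ to $\C_n$, which with $t_0=2\log\phi$ and $t_{\pm1}=2(\log(\phi-1)\pm i\pi)$ produces the $\frac{n!}{2^{n}\sqrt{5}}(\cdots)^{-n-1}$ shape of the statement.

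Two points need repair. First, the error term: after subtracting the principal parts at $t_0$ and $t_{\pm1}$ only, the remaining function still has poles at $2\log\phi\pm 4\pi i$, of modulus $\approx 12.6$, sitting strictly inside the circle of radius $18.87$. You therefore cannot ``pick $R'$ just below the pole layer at modulus $\approx 18.87$''; Wilf's contour argument applied to your three-pole main term only yields $O\bigl(n!\,(0.08)^{n}\bigr)$. To reach the announced $(0.053)^{n}$ one must also subtract the principal parts of the modulus-$12.6$ layer, whose contribution is of order $n!\,(0.0794)^{n}$ and is not absorbed by the displayed main term; as written, this step of your argument fails (the defect is arguably inherited from the statement itself, but your proposed justification does not close it). Second, a bookkeeping check: the conjugate pair $t_{\pm1}$ contributes $2\,\mathfrak{Re}$ of the $t_1$-term, so your derivation actually gives coefficient $2$ in front of $(i\pi+\log(\phi-1))^{-n-1}$, whereas the theorem's $\mathfrak{Re}(\cdots)$ carries coefficient $1$; you assert your computation ``produces the closed form stated,'' but it differs by this factor of $2$ (numerically invisible at small $n$ since that term is exponentially smaller than the leading one), so you should either flag the discrepancy or reconcile the normalization.
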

\begin{proof}
Applying the previous steps in the generating function $G(t)=\frac{1}{1-2\sinh(t/2)}$ gives the result.
\end{proof}

\end{document}